\documentclass[10pt]{elsarticle}
\usepackage[cp1251]{inputenc}
\usepackage[english]{babel}
\usepackage{amsmath}
\usepackage{amssymb}
\usepackage{amsfonts}
\usepackage{mathtools}
\usepackage{dsfont}
\usepackage{rotating}
\usepackage{graphicx}
\usepackage{floatflt,epsfig}
\usepackage{lineno,hyperref}
\usepackage{enumerate}
\usepackage{colortbl}
% Work with table
\usepackage{array,tabularx,tabulary,booktabs}
\usepackage{longtable}
\usepackage{multirow}
\usepackage{wrapfig}
\usepackage{subcaption}
\usepackage[table]{xcolor}
%\newcolumntype{$}{>{\global\let\currentrowstyle\relax}}
\newcolumntype{^}{>{\currentrowstyle}}

\journal{Linear Algebra and its Applications}
\setcounter{page}{1}
\newtheorem{lemma}{Lemma}

\newtheorem{theorem}{Theorem}
\newtheorem{corollary}{Corollary}
\newtheorem{proposition}{Proposition}
\newtheorem{problem}{Problem}
\newcommand{\proof}{\medskip\noindent{\bf Proof.~}}
%% `Elsevier LaTeX' style
\bibliographystyle{elsarticle-num}

\DeclareMathOperator{\Spec}{Spec}

\DeclareMathOperator{\textif}{if}
\DeclareMathOperator{\otherwise}{otherwise}

\begin{document}
\renewcommand{\abstractname}{Abstract}
\renewcommand{\refname}{References}
\renewcommand{\tablename}{Figure.}
\renewcommand{\arraystretch}{0.9}
\thispagestyle{empty}
\sloppy

\begin{frontmatter}
\title{Integral graphs obtained by dual Seidel switching}

\author[01,02,03,04]{Sergey Goryainov}
\ead{sergey.goryainov3@gmail.com}

\author[04,05]{Elena~V.~Konstantinova}
\ead{e\_konsta@math.nsc.ru}

\author[06]{Honghai Li}
\ead{lhh@mail.ustc.edu.cn}

\author[01]{Da Zhao}
\ead{jasonzd@sjtu.edu.cn}

\address[01]{Shanghai Jiao Tong University, 800 Dongchuan RD. Minhang District \\ Shanghai 200240, China}
\address[02]{Krasovskii Institute of Mathematics and Mechanics, S. Kovalevskaja st. 16 \\ Yekaterinburg
620990, Russia}
\address[03] {Chelyabinsk State University, Brat'ev Kashirinyh st. 129\\Chelyabinsk  454021, Russia}
\address[04]{Sobolev Institute of Mathematics, Ak. Koptyug av. 4, Novosibirsk 630090, Russia}
\address[05]{Novosibirsk State University, Pirogova str. 2, Novosibirsk, 630090, Russia}
\address[06]{Jiangxi Normal University, Nanchang, Jiangxi 330022, China}

%\tnotetext[grant]{The reported study was funded by ...}

\begin{abstract}
Dual Seidel switching is a graph operation introduced by W.~Haemers in 1984. This operation can change the graph, however it does not change its bipartite double, and because of this, the operation leaves the squares of the eigenvalues invariant. Thus, if a graph is integral then it is still integral after dual Seidel switching. In this paper two new infinite families of integral graphs are obtained by applying dual Seidel switching to the Star graphs and the Odd graphs. In particular, three new $4$-regular integral graphs with their spectra are found. \\
\end{abstract}

\begin{keyword}
dual Seidel switching; multiple Kronecker covering; integral graph; 4-regular graph; Star graph; Odd graph
\vspace{\baselineskip}
\MSC[2010] 05C25\sep 05E10\sep 05E15
\end{keyword}
% 05C25 graphs and groups
% 05E10 graphs and matrices
% 05E15 combinatorial problems concerning the classical groups;
\end{frontmatter}
%%%%%%%%%%%%%%%%%%%%%%%%%%%%%%%%%%%%%%%%%%%%%%%%%%%%%%%%%%%%%%%%%%%%%%%%%%%%%%%%%%%%%%%%%%%%%%%%%%%%%%%%%%%%%%%%%%%%%%%%%%%%%%%%%%

\section{Introduction}\label{sec1}
\subsection{Motivation}
For a graph $\Gamma$ with adjacency matrix $A(\Gamma)$, the {\it bipartite double} of $\Gamma$ is the graph with adjacency matrix $\begin{bmatrix} 0 & A(\Gamma) \\ A(\Gamma) & 0 \end{bmatrix}$. The bipartite double of $\Gamma$ can be seen as the Kronecker product of $\Gamma$
with $K_2$, which is also known as the {\it Kronecker covering} of $\Gamma$.
If two (or more) non-isomorphic graphs have isomorphic Kronecker coverings, we speak of
a multiple Kronecker covering (see \cite{IP08}).
 In this paper we establish a connection between the phenomenon of multiple Kronecker covering and the dual Seidel switching operation, which was introduced in \cite{H84}. We then apply dual Seidel switching to the Star graphs and to the Odd graphs, which gives two new infinite families of integral graphs. In particular, Theorem \ref{TStar} and Corollary \ref{invStar} give a new 4-regular graph with spectrum $\{ (-3)^{7}, (-2)^{13}, (-1)^3, 0^{15},  1^1, 2^{15}, 3^{5}, 4^1 \}$ and Theorem \ref{TOt} gives two new 4-regular graphs with spectra $\{(-3)^5, (-2)^4, (-1)^9, 1^5, 2^{10}, 3^1, 4^1\}$ and $\{ (-3)^4, (-2)^6, (-1)^8, 1^6, 2^8, 3^2, 4^1 \}$.  Also, Theorem \ref{TStar} and Corollary \ref{inv1Star} give a vertex-transitive 4-regular graph, which is already known from~\cite[page 402]{MW15} as the Cayley graph $H_{15}$. In this paper we found an infinite family of Cayley graphs that contains this graph and whose bipartite double is isomorphic to the Star graph.

\subsection{Integral graphs}

A graph is {\it integral} if all eigenvalues of its adjacency matrix are integers~\cite{BH12,HS74}. For a graph $\Gamma$, let $Spec(\Gamma)$ denote its spectrum. In this paper we deal with regular integral graphs. Some constructions of non-regular integral graphs can be found in~\cite{MT11,WH08}.

It was proved by D.~Cvetkovi\'{c}~\cite{C75} in $1975$ that the set of connected regular integral graphs of any fixed degree is finite. Classification of $3$-regular integral connected graphs was given in $1976$ by F.~C.~Bussemaker, D.~Cvetkovi\'{c}~\cite{BC76} and A.~J.~Schwenk~\cite{Sch78}. There are exactly $13$ non-isomorphic cubic integral connected graphs.

There is no complete classification of $4$-regular integral connected graphs. In what follows, we briefly observe known results on their classification.
In $1998$, D.~Cvetkovi\'{c}, S.~Simi\'{c}, and D.~Stevanovi\'{c}~\cite{CSS98} found $1888$ possible spectra of $4$-regular bipartite integral graphs, more than $500$ of which do not correspond to a graph as it was shown in~\cite{S98}. They also published a list of $65$ known $4$-regular connected integral graphs. All connected $4$-regular integral graphs that do not contain $\pm3$ in the spectrum were determined by D.~Stevanovi\'{c}~\cite{S03} in $2003$; he found 24 such graphs. In the same paper it was shown that the number of vertices in $4$-regular integral graphs satisfies $8 \leqslant n \leqslant 1260$, except for $5$ identified spectra. In $2007$, the upper bound for $n$ was improved~\cite{SAF07} so that now we have $8 \leqslant n \leqslant 560$, and the number of possible spectra of connected $4$-regular bipartite integral graphs was decreased down to $828$. Moreover, all $828$ feasible spectra of connected $4$-regular bipartite integral graphs, and all $47$ connected $4$-regular bipartite integral graphs with up to $24$ vertices were listed. The largest of these $828$ spectra has $560$ vertices, and actually there are only $12$ spectra with more than $360$ vertices.
The spectra of $14$ non-bipartite connected $4$-regular integral graphs found by decomposing the known $47$ bipartite connected $4$-regular integral graphs are given in~\cite{SAF07}.

In $2015$, M.~Minchenko and I.~M.~Wanless \cite{MW15} investigated $4$-regular integral Cayley graphs. It was shown that up to isomorphism, there are $32$ connected $4$-regular integral Cayley graphs ($17$ of them are bipartite) and $27$ connected $4$-regular integral arc-transitive graphs ($16$ of them are bipartite, and $16$ of them are Cayley graphs). Complete catalogues of these graphs are available by~\url{users.monash.edu.au/~iwanless/data/graphs/IntegralGraphs.html}.

\section{Preliminaries}
\subsection{Dual Seidel switching}
For any simple graph $\Gamma$ with adjacency matrix $A(\Gamma)$ and an order 2 automorphism $\varphi$ of $\Gamma$ interchanging only non-adjacent vertices,   we have
$$PA(\Gamma)P^T=A(\Gamma),$$
where $P$ is the permutation matrix corresponding to the automorphism $\varphi$. It is easy to verify that $PA(\Gamma)$ is a symmetric (0,1)-matrix with zero diagonal and thus can be viewed as an adjacency matrix of some simple graph. The resulting graph is said to be obtained from $\Gamma$ by \emph{dual Seidel switching} induced by $\varphi$ (\cite{H84} and \cite[Theorem 3.1]{EFHHH99}). Note further  that  $(PA(\Gamma))^2=(A(\Gamma))^2$. In particular, if $\Gamma$ is integral, then a graph obtained from $\Gamma$ by the dual Seidel switching is integral as well.

\begin{lemma}\label{Images} Let $\Gamma$ be a graph and $\varphi$ be an order 2 automorhism interchanging only non-adjacent vertices. Let $\Delta$ be the graph obtained from $\Gamma$ by dual Seidel switching induced by $\varphi$.
For the neighbourhood $\Delta(x)$ of a vertex $x$ of the graph $\Delta$, the following conditions hold:
$$
\Delta(x) =
\left\{
  \begin{array}{ll}
    \Gamma(x), & \hbox{if $\varphi(x) = x$;} \\
    \Gamma(\varphi(x)), & \hbox{if $\varphi(x) \ne x$.}
  \end{array}
\right.
$$
\end{lemma}
\begin{proof}
    It follows from the definition of dual Seidel switching. $\square$
\end{proof}

\subsection{Kronecker covering and combinatorial polarities}
The following lemma gives basic properties of Kronecker covering.

\begin{lemma}[{\cite[Proposition 1]{IP08}}]\label{connectedness}
Kronecker covers of graphs are bipartite. If $G$ is bipartite, then its Kronecker cover consists of two copies of $G$. If $G$ is connected and non-bipartite then its Kronecker cover is connected.
\end{lemma}

There is a criterion for a graph to be a Kronecker cover.
\begin{lemma}[{\cite[Proposition 2]{IP08}}]\label{Polarity}
Let $K$ be a bipartite graph with bipartition $(V_1,V_2)$ and let $\pi \in {\rm Aut}(K)$ be a fixed-point free involution such that $\pi$ interchanges the bipartition, i.e. $\pi(V_1) = V_2$. If the vertices $v$ and $\pi(v)$ are non-adjacent for every vertex $v$ of $K$, then $K$ is a Kronecker cover.
\end{lemma}

An order 2 automorhism $\pi$ satisfying the condition of Lemma \ref{Polarity} is called a \emph{combinatorial polarity} (see \cite{IP08}). Combinatorial polarities thus lead to quotient graphs where pairs of images are identified.

\subsection{Cayley graphs}
Let $G$ be a group and $S$ be an inverse-closed identity-free subset in $G$.
We define the \emph{left Cayley graph} $Cay_L(G,S)$ (resp. \emph{right Cayley graph} $Cay_R(G,S)$) as the graph whose vertices are the elements of the group $G$ and with two vertices $x,y$ being adjacent whenever $y^{-1}x \in S$ (resp. $xy^{-1} \in S$) holds. For an element $\pi \in G$, let
$\varphi^\ell_\pi$ and $\varphi^r_\pi$ denote the left and the right shifts by the element $\pi$.
Let $L_G = \{\varphi^\ell_\pi~|~\pi\in G\}$ and $R_G = \{\varphi^r_\pi~|~\pi\in G\}$ be the groups of left and right shifts $G$, respectively.

\begin{lemma}\label{aut} The following statements hold.\\
{\rm (1)} $L_G$ is a group of automorphisms of $Cay_L(G,S)$;\\
{\rm (2)} $R_G$ is a group of automorphisms of $Cay_R(G,S)$;\\
{\rm (3)} For an element $\pi \in G$, the mapping $\varphi^r_\pi$ is an automorphism of $Cay_L(G,S)$ if and only if $\pi S\pi^{-1} = S$ holds;\\
{\rm (4)} For an element $\pi \in G$, the mapping $\varphi^\ell_\pi$ is an automorphism of $Cay_R(G,S)$ if and only if
$\pi S\pi^{-1} = S$ holds.
\end{lemma}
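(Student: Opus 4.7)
The plan is to prove all four parts by direct computation, exploiting the definitions of $Cay_L$ and $Cay_R$ and the fact that each map under consideration is automatically a bijection on vertices, so that the whole question reduces to whether adjacency is preserved.

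For part (1), I would take $\pi \in G$ and check that $\varphi^\ell_\pi$ preserves adjacency in $Cay_L(G,S)$. Two vertices $x, y$ are adjacent iff $y^{-1}x \in S$; evaluating on images, $(\varphi^\ell_\pi(y))^{-1}\varphi^\ell_\pi(x) = (\pi y)^{-1}(\pi x) = y^{-1}\pi^{-1}\pi x = y^{-1}x$, so the adjacency condition is unchanged. Part (2) is symmetric: for $\varphi^r_\pi$ acting on $Cay_R(G,S)$ one computes $\varphi^r_\pi(x)\varphi^r_\pi(y)^{-1} = (x\pi)(y\pi)^{-1} = xy^{-1}$, so again adjacency is preserved. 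In both cases closure under composition and inverses follows since $L_G$ and $R_G$ are groups under composition of maps.

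For part (3), the map $\varphi^r_\pi$ is always a bijection on vertices, so it is an automorphism of $Cay_L(G,S)$ iff adjacency is preserved in both directions, i.e., for all $x, y \in G$,
$$y^{-1}x \in S \iff (\varphi^r_\pi(y))^{-1}\varphi^r_\pi(x) \in S.$$
Computing the right-hand factor gives $(y\pi)^{-1}(x\pi) = \pi^{-1}y^{-1}x\pi$. Setting $s = y^{-1}x$ and letting $s$ range over $G$, this condition becomes $s \in S \iff \pi^{-1}s\pi \in S$, which is exactly $\pi^{-1}S\pi = S$, equivalently $\pi S\pi^{-1} = S$. Part (4) is analogous: for $\varphi^\ell_\pi$ on $Cay_R(G,S)$, one computes $\varphi^\ell_\pi(x)\varphi^\ell_\pi(y)^{-1} = (\pi x)(\pi y)^{-1} = \pi(xy^{-1})\pi^{-1}$, and the same substitution argument yields the condition $\pi S\pi^{-1} = S$.

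None of the steps present a real obstacle; the only mildly subtle point is that parts (3) and (4) are \emph{iff} statements, so one must be careful to let $s = y^{-1}x$ (respectively $xy^{-1}$) range over all of $G$ when converting the adjacency-preservation condition into the stated conjugation equality, ensuring both directions of the equivalence.
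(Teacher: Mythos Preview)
Your proof is correct and follows essentially the same approach as the paper: a direct computation showing that the adjacency relation is preserved (for (1)--(2)) or that the image adjacency condition becomes $y^{-1}x \in \pi S \pi^{-1}$ (for (3)--(4)). Your explicit remark about letting $s=y^{-1}x$ range over all of $G$ to secure both directions of the equivalence in (3)--(4) is a nice addition that the paper leaves implicit.
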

\proof
(1) For any vertices $x,y \in G$ and element $\pi \in G$, we have
$$\varphi^\ell_\pi(x) \sim_L \varphi^\ell_\pi(y) \Leftrightarrow \exists s \in S~~  \varphi^\ell_\pi(y)^{-1}\varphi^\ell_\pi(x) = s \Leftrightarrow$$
$$\Leftrightarrow \exists s \in S~~  (\pi y)^{-1}\pi x = s
\Leftrightarrow \exists s \in S~~  y^{-1}x = s
\Leftrightarrow
x \sim_L y
$$

(2) Similar to item (1).

(3)  For any vertices $x,y \in G$ and element $\pi \in G$, we have $$\varphi^r_\pi(x) \sim_L \varphi^r_\pi(y) \Leftrightarrow \exists s \in S~~  \varphi^r_\pi(y)^{-1}\varphi^r_\pi(x) = s \Leftrightarrow$$
$$\Leftrightarrow \exists s \in S~~  (y\pi )^{-1}x\pi  = s
\Leftrightarrow \exists s \in S~~  y^{-1}x = \pi s\pi^{-1}
\Leftrightarrow
 y^{-1}x \in \pi S\pi^{-1}.
$$

(4) Similar to item (3). $\square$

\medskip
For any elements $\pi_\ell,\pi_r \in G$, denote by $\varphi_{\pi_\ell,\pi_r}$ the mapping that sends $x$ to $\pi_\ell x\pi_r$ for all $x \in G$.

\begin{lemma}\label{phipipi} For elements $\pi_\ell,\pi_r \in G$, the following statements hold.\\
{\rm (1)} $\varphi_{\pi_\ell,\pi_r}$ is the composition of $\varphi^\ell_{\pi_\ell}$ and $\varphi^r_{\pi_r}$.\\
{\rm (2)} $\varphi_{\pi_\ell,\pi_r}$ is an automorphism of $Cay_L(G,S)$ if and only if $\pi_r S\pi_r^{-1} = S$ holds.\\
{\rm (3)} $\varphi_{\pi_\ell,\pi_r}$ is an automorphism of $Cay_R(G,S)$ if and only if $\pi_\ell S\pi_\ell^{-1} = S$ holds.
\end{lemma}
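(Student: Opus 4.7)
The plan is to prove the three items in order, leaning heavily on Lemma \ref{aut}, since once (1) is in place the remaining parts reduce to checking which factor in the composition carries the nontrivial condition.

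For (1), I would just unwind the definitions: applying $\varphi^r_{\pi_r}$ first sends $x$ to $x\pi_r$, and then $\varphi^\ell_{\pi_\ell}$ sends this to $\pi_\ell x\pi_r$, which is exactly $\varphi_{\pi_\ell,\pi_r}(x)$. So $\varphi_{\pi_\ell,\pi_r} = \varphi^\ell_{\pi_\ell}\circ\varphi^r_{\pi_r}$. Note that the order does not actually matter here (the two shifts commute because left and right multiplication commute), but the displayed composition is the one we need below.

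For (2), I would use (1) together with Lemma \ref{aut}. By Lemma \ref{aut}(1), $\varphi^\ell_{\pi_\ell}$ is always an automorphism of $Cay_L(G,S)$, regardless of $\pi_\ell$. Therefore the composition $\varphi^\ell_{\pi_\ell}\circ\varphi^r_{\pi_r}$ is an automorphism of $Cay_L(G,S)$ if and only if $\varphi^r_{\pi_r}$ is, and by Lemma \ref{aut}(3) this happens precisely when $\pi_r S\pi_r^{-1} = S$. Part (3) is entirely analogous, using Lemma \ref{aut}(2) to say that $\varphi^r_{\pi_r}$ is automatically an automorphism of $Cay_R(G,S)$, and Lemma \ref{aut}(4) to characterise when $\varphi^\ell_{\pi_\ell}$ is.

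I do not expect any real obstacle here; the statement is essentially a bookkeeping consequence of Lemma \ref{aut}. The only thing to be mildly careful about is the direction of composition in (1), so that the ``inner'' factor is the one whose automorphism criterion supplies the condition quoted in (2) and (3); once that is spelled out, the ``if and only if'' in each part is immediate.
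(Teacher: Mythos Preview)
Your proposal is correct and follows exactly the route the paper takes: item (1) is unwound from the definitions, and items (2) and (3) are deduced from Lemma~\ref{aut}(3) and~(4) respectively, with Lemma~\ref{aut}(1),(2) used implicitly to strip off the always-automorphic factor. The paper's proof is simply a terser version of what you wrote.
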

\proof
(1) It follows from the definitions.

(2) It follows from Lemma \ref{aut}(3).

(3) It follows from Lemma \ref{aut}(4). $\square$

\section{Dual Seidel switching and Kronecker covering}
In this section we establish a connection between the phenomenon of multiple Kronecker covering and the dual Seidel switching operation. Let us note that, for any two non-isomorphic graphs $\Gamma_1$ and $\Gamma_2$, to share the bipartite double just means the bipartite double admits multiple Kronecker covering (at least, $\Gamma_1$ and $\Gamma_2$ are covered).

\begin{lemma}
    Let $\Gamma$ be the Kronecker cover of a graph $\Gamma'$.
    Then $\Spec(\Gamma) = \Spec(\Gamma') \cup -\Spec(\Gamma')$.
\end{lemma}

\begin{proof}
    It follows from $A(\Gamma) = A(\Gamma') \otimes \begin{bmatrix}0 & 1 \\1 & 0 \end{bmatrix}$.
\end{proof}

\begin{theorem}\label{CommonBD}
    Suppose a connected graph $\Gamma$ is the common bipartite double of $\Gamma_1$ and $\Gamma_2$, then there exists a permutation matrix $P$ such that $A(\Gamma_2) = PA(\Gamma_1)$ and $PA(\Gamma_1)P = A(\Gamma_1)$.
\end{theorem}

\begin{proof}
    Because $\Gamma_1$ and $\Gamma_2$ have the same bipartite double, the matrices
    $\begin{bmatrix} 0 & A(\Gamma_1) \\ A(\Gamma_1) & 0 \end{bmatrix}$ and
    $\begin{bmatrix} 0 & A(\Gamma_2) \\ A(\Gamma_2) & 0 \end{bmatrix}$
    are both an adjacency matrix of $\Gamma$. Since $\Gamma$ is connected, the bipartition is unique.
    This implies that $A(\Gamma_2)$ is obtained by permuting the rows and columns in $A(\Gamma_1)$, namely $A(\Gamma_2) = P A(\Gamma_1)Q$, where $P$ and $Q$ represent the row and the column permutations, respectively.
    In fact we may assume $A(\Gamma_2) = P A(\Gamma_1)$ because we can always reorder the vertices in the adjacency matrix.
    The symmetry of $A(\Gamma_1)$ yields
    $PA(\Gamma_1) = A^T(\Gamma_1) P^T = A(\Gamma_1) P^T$.
    In other words $PA(\Gamma_1)P = A(\Gamma_1)$. $\square$
\end{proof}

\begin{theorem}\label{sameKC}
    Let $\Gamma'$ be a graph obtained from a graph $\Gamma$ by dual Seidel switching.
    Then $\Gamma$ and $\Gamma'$ share the same bipartite double.
\end{theorem}

\begin{proof}
    Let $A$ be the adjacency matrix of $\Gamma$ and $P$ be the permutation matrix corresponding to the involution.
    The theorem follows from the following identity:
    \begin{equation}
        \begin{bmatrix}
            I & 0 \\
            0 & P
        \end{bmatrix}
        .
        \begin{bmatrix}
            0 & PA \\
            PA & 0
        \end{bmatrix}
        .
        \begin{bmatrix}
            I & 0 \\
            0 & P
        \end{bmatrix}^T
        =
        \begin{bmatrix}
            0 & A \\
            A & 0
        \end{bmatrix}.
    \end{equation}
    $\square$
\end{proof}

We have thus proved that all graphs that share the bipartite double are connected through dual Seidel switching, which gives a solution of \cite[Problem 1]{IP08}.

\section{Dual Seidel switching and Star graphs}
In this section, we apply dual Seidel switching to Star graphs, which, for every integer $n \geqslant 5$, gives a pair of two isomorphic connected integral graphs. In particular, we construct a new connected 4-regular integral graph in the case $n = 5$.

Let $n$ be a positive integer, $n \geqslant 3$. Consider the symmetric group $G = Sym_n$ and put $S = \{(1~i)~|~ i \in \{2,\ldots,n\}\}$.
The \emph{left Star graph} (resp. \emph{right Star graph}) is the Cayley graph $Cay_L(Sym_n,S)$ (resp. $Cay_R(Sym_n,S)$).

\begin{lemma}\label{stab}
For an element $\pi \in G$, the equality
$\pi S\pi^{-1} = S$ holds if and only if $\pi$ is an element from $Stab_G(1)$, where $Stab_G(1)$ is the stabilizer of 1 in $G$.
\end{lemma}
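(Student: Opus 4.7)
The plan is to reduce the condition $\pi S \pi^{-1} = S$ to a simple condition on $\pi(1)$, exploiting the standard identity for conjugating a transposition. First I would record that $\pi (a~b) \pi^{-1} = (\pi(a)~\pi(b))$, so that $\pi(1~i)\pi^{-1} = (\pi(1)~\pi(i))$ for each $i \in \{2,\ldots,n\}$. The condition $\pi S \pi^{-1} = S$ then translates into the requirement that every pair $\{\pi(1),\pi(i)\}$ (for $i \in \{2,\ldots,n\}$) must contain the symbol $1$, since the elements of $S$ are exactly the transpositions moving $1$.

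The ($\Leftarrow$) direction is essentially immediate: if $\pi(1) = 1$, then $\pi$ restricts to a bijection of $\{2,\ldots,n\}$, so $\pi S \pi^{-1} = \{(1~\pi(i)) : i \in \{2,\ldots,n\}\} = S$.

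For the ($\Rightarrow$) direction I would argue contrapositively. Suppose $\pi(1) \neq 1$. Then in order for $\{\pi(1),\pi(i)\}$ to contain $1$ for every $i \in \{2,\ldots,n\}$, we would need $\pi(i) = 1$ for \emph{every} such $i$; since $|\{2,\ldots,n\}| = n-1 \geq 2$ this contradicts the injectivity of $\pi$. Hence $\pi(1) = 1$, i.e.\ $\pi \in Stab_G(1)$.

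No step here poses a genuine obstacle; the only subtlety to flag is the role of the hypothesis $n \geq 3$, which is exactly what ensures $|\{2,\ldots,n\}| \geq 2$ and thus forces the contradiction in the contrapositive argument. (For $n = 2$ the lemma would in fact fail: $(1~2) S (1~2)^{-1} = S$, yet $(1~2) \notin Stab_G(1)$.)
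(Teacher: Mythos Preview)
Your proof is correct and follows essentially the same route as the paper: both use the conjugation identity $\pi(1~i)\pi^{-1} = (\pi(1)~\pi(i))$, dispatch the $(\Leftarrow)$ direction immediately, and for $(\Rightarrow)$ argue contrapositively that if $\pi(1)\neq 1$ then some (in the paper, a single chosen $i\neq \pi^{-1}(1)$; in your version, all but one $i$) conjugate $(\pi(1)~\pi(i))$ fails to lie in $S$. Your remark on why $n\geq 3$ is needed is a nice addition not made explicit in the paper.
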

\begin{proof}
Let $\pi$ be a permutation from $Stab_G(1)$, which means that $\pi(1) = 1$. Note that, for any $i \in \{2,\ldots,n\}$, we have $\pi (1~i) \pi^{-1} = (1~j)$, where $j = \pi(i)$. Thus, $\pi S\pi^{-1} = S$ for all $\pi \in Stab_G(1)$.

Let $\pi$ be a permutation that does not belong to $Stab_G(1)$ and let $i$ be an element from $\{2,\ldots,n\}$ such that $\pi^{-1}(1) \ne i$. Then
the permutation $\pi(1~i)\pi^{-1}$ does stabilize
$1$ and thus does not belong to $S$, which means that
$\pi S\pi^{-1} \ne S$. $\square$
\end{proof}

Recall that, for any elements $\pi_\ell,\pi_r \in Sym_n$, $\varphi_{\pi_\ell,\pi_r}$ denotes the mapping that sends $x$ to $\pi_\ell x\pi_r$ for all $x \in Sym_n$.

\begin{lemma}\label{involution}
For  $\pi_\ell,\pi_r\in Sym_n$, if  $\pi_\ell,\pi_r$ satisfy\\
{\rm(1)} $\pi_\ell,\pi_r$ are both involutions;\\
{\rm(2)} $\pi_\ell,\pi_r$ have different parity;\\
{\rm(3)} $\pi_r S\pi_r^{-1} = S$;\\
{\rm(4)} $\pi_\ell$ is not conjugate to any element in $\pi_r S$,\\
then
$\varphi_{\pi_\ell,\pi_r}$ is an order 2 automorphism of the left Star graph $Cay_L(Sym_n,S)$ interchanging only non-adjacent vertices from different parts in bipartition of $Cay_L(Sym_n,S)$.
\end{lemma}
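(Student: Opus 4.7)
The plan is to verify four properties of $\varphi_{\pi_\ell,\pi_r}$ in turn: (i) it is an automorphism of $Cay_L(Sym_n,S)$, (ii) it swaps the two sides of the bipartition, (iii) it has order exactly $2$, and (iv) it interchanges only non-adjacent vertices. Properties (i)--(iii) are light and essentially immediate from the already-proved lemmas; the real content is (iv), which is a direct algebraic translation of condition (4).

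For (i), Lemma \ref{phipipi}(2) applies immediately: condition (3) is precisely its hypothesis. For (ii), I would observe that $Cay_L(Sym_n,S)$ is bipartite with parts formed by the even and the odd permutations, since every element of $S$ is a transposition. The parity of $\pi_\ell x \pi_r$ is the parity of $x$ shifted by the combined parities of $\pi_\ell$ and $\pi_r$; condition (2) makes this combined shift odd, so $\varphi_{\pi_\ell,\pi_r}$ swaps the two parts and in particular is fixed-point free. For (iii), condition (1) gives $\varphi_{\pi_\ell,\pi_r}^2(x) = \pi_\ell^2\, x\, \pi_r^2 = x$; combined with the absence of fixed points from (ii), the order is exactly $2$.

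Step (iv) is the crux. Two vertices $x$ and $\varphi_{\pi_\ell,\pi_r}(x) = \pi_\ell x \pi_r$ are adjacent in $Cay_L(Sym_n,S)$ if and only if $(\pi_\ell x \pi_r)^{-1} x \in S$. Using $\pi_\ell^{-1} = \pi_\ell$ and $\pi_r^{-1} = \pi_r$ from (1), this simplifies to $\pi_r x^{-1} \pi_\ell x \in S$, equivalently $x^{-1} \pi_\ell x \in \pi_r S$ (multiplying on the left by $\pi_r^{-1} = \pi_r$). Thus non-adjacency for every $x \in Sym_n$ is exactly the statement that $\pi_\ell$ has no conjugate inside $\pi_r S$, which is condition (4).

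I do not anticipate a serious obstacle; the only step requiring care is keeping the order of factors and of inverses straight when computing adjacency in the \emph{left} Cayley graph, so that the final translation lands precisely on $x^{-1}\pi_\ell x \in \pi_r S$ rather than a mirror variant. Conditions (1)--(4) have been set up so that each of the four required properties falls out cleanly.
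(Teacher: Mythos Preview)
Your proposal is correct and follows essentially the same route as the paper: Lemma~\ref{phipipi}(2) together with condition~(3) gives the automorphism, condition~(2) handles the bipartition swap, and the adjacency computation $(\pi_\ell x\pi_r)^{-1}x = \pi_r x^{-1}\pi_\ell x \in S \Leftrightarrow x^{-1}\pi_\ell x \in \pi_r S$ reduces non-adjacency to condition~(4). You are slightly more explicit than the paper in separately verifying that the order is exactly~$2$ via the fixed-point-free property, but the argument is otherwise identical.
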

\begin{proof}
 For all vertices $x \in Sym_n$, by condition (2),
the vertices $x$ and $\pi_\ell~x\pi_r$ belong to different parts in  bipartition of $Cay_L(Sym_n,S)$.
In view of Lemma \ref{phipipi}(2), it suffices to show that   $x$ and $\pi_\ell~x\pi_r$ are non-adjacent in $Cay_L(Sym_n,S)$. Suppose there exists a vertex $x$ such that $x$ and $\pi_\ell~x\pi_r$ are adjacent, which means
that $\pi_r~x^{-1}\pi_\ell~x = (1~i)$ for some $i \in \{2,\ldots,n\}$. Thus we have $x^{-1}\pi_\ell~x = \pi_r(1~i)$, a contradiction with condition (4). $\square$
\end{proof}

\medskip
In Corollary \ref{inv1Star} and Corollary \ref{invStar} we consider specific mappings $\varphi_{\pi_\ell,\pi_r}$, which, in view of Lemma \ref{involution}, turn out to be order 2 automorphisms of the left Star graph $Cay_L(Sym_n,S)$ interchanging only non-adjacent vertices from different parts of the bipartition of $Cay_L(Sym_n,S)$.

\begin{corollary}\label{inv1Star}
For a positive integer $n \geqslant 4$,
$\varphi_{id,(2~3)}$ is an order 2 automorphism of the left Star graph $Cay_L(Sym_n,S)$ interchanging only non-adjacent vertices, where $id \in Sym_n$ is the identity.
\end{corollary}
\begin{proof} Straightforward. $\square$
\end{proof}

\begin{corollary}\label{invStar}
For a positive integer $n \geqslant 5$,
$\varphi_{(2~4),(2~3)(4~5)}$ is an order 2 automorphism of the left Star graph $Cay_L(Sym_n,S)$ interchanging only non-adjacent vertices.
\end{corollary}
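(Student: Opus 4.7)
The plan is to apply Lemma \ref{involution} directly with $\pi_\ell = (2~4)$ and $\pi_r = (2~3)(4~5)$, so the task reduces to verifying the four hypotheses. Conditions (1) and (2) are immediate from inspection: both $(2~4)$ and $(2~3)(4~5)$ are involutions, while $(2~4)$ is odd and $(2~3)(4~5)$ is even. Condition (3) is a one-line appeal to Lemma \ref{stab}: since $(2~3)(4~5)$ fixes $1$, it lies in $Stab_G(1)$, and hence $\pi_r S \pi_r^{-1} = S$.

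The only substantive work is condition (4), which I would dispatch by a cycle-type argument. Two permutations in $Sym_n$ are conjugate if and only if they share the same cycle type, so it suffices to show that no element of $\pi_r S$ has cycle type $(2,1^{n-2})$, which is the cycle type of $\pi_\ell = (2~4)$. The elements of $\pi_r S$ are the products $\sigma_i = (2~3)(4~5)(1~i)$ for $i \in \{2,\ldots,n\}$. I would split into two cases. For $i \in \{2,3,4,5\}$, the transposition $(1~i)$ overlaps the support $\{2,3,4,5\}$ of $\pi_r$; a direct computation shows that each such $\sigma_i$ has cycle type $(3,2,1^{n-5})$ (for example, $\sigma_2 = (1~3~2)(4~5)$ and $\sigma_4 = (1~5~4)(2~3)$). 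For $i \geq 6$, the transposition $(1~i)$ is disjoint from $(2~3)(4~5)$, so $\sigma_i$ has cycle type $(2,2,2,1^{n-6})$. In neither case does $\sigma_i$ have cycle type $(2,1^{n-2})$, so $\pi_\ell$ is not conjugate to any element of $\pi_r S$.

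Once all four conditions are verified, Lemma \ref{involution} yields the desired conclusion: $\varphi_{(2~4),(2~3)(4~5)}$ is an order $2$ automorphism of $Cay_L(Sym_n,S)$ that interchanges only non-adjacent vertices (necessarily from different parts of the bipartition).

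The main obstacle is just the finite cycle-type check of step (4); everything else is essentially bookkeeping. The hypothesis $n \geq 5$ is used only to guarantee that the elements $2,3,4,5$ are all available as distinct indices of $Sym_n$, so that $\pi_\ell$ and $\pi_r$ are honestly defined; the case split in (4) remains valid for all such $n$.
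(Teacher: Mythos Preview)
Your proposal is correct and follows essentially the same approach as the paper: both reduce to verifying condition (4) of Lemma \ref{involution} via a cycle-type argument, splitting into the cases $2 \leqslant i \leqslant 5$ (where $(2~3)(4~5)(1~i)$ is a $3$-cycle times a transposition) and $i \geqslant 6$ (where it is a product of three disjoint transpositions). Your version is slightly more explicit in checking conditions (1)--(3) and in exhibiting sample computations, but the substance is identical.
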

\begin{proof}
It suffices to show that, for all  $x \in Sym_n$ and $i \in \{2,\ldots, n\}$,
that $x^{-1}(2~4)x \neq (2~3)(4~5)(1~i)$. Note that, $(2~3)(4~5)(1~i)$ is either a product of a transposition and a cycle of length 3 (the case $2 \leqslant i  \leqslant 5$) or a product of three disjoint transpositions (the case $i \geqslant 6$). Since the conjugation preserves the cyclic structure of a permutation, we are completed. $\square$
\end{proof}

\begin{theorem}\label{TStar}
Let  $\varphi$ be an automorphism of left Star graph $Cay_L(Sym_n, S)$ satisfying the four conditions in Lemma~\ref{involution}. Then the graph obtained from  $Cay_L(Sym_n, S)$ by dual Seidel switching induced by $\varphi$ consists of two isomorphic connected components. Moreover, the bipartite double of such a component is isomorphic to $Cay_L(Sym_n, S)$.
\end{theorem}
\begin{proof}
We denote by $A$ the adjacency matrix of $Cay_L(Sym_n, S)$.
Since $Cay_L(Sym_n, S)$ is bipartite,  we can relabel the vertices such that its adjacency matrix $A$  has the following form
$$A=\left(
  \begin{array}{cc}
   0 & B \\
   B^T & 0\\
  \end{array}
\right).$$

 Let $P$ denote the permutation matrix corresponding to the automorphism $\varphi$ of $Cay_L(Sym_n, S)$.  Since $\varphi$ just swaps the parts of the bipartition, accordingly $P$ is of the form as follows
$$P=\left(
  \begin{array}{cc}
   0 & Q^T \\
   Q & 0\\
  \end{array}
\right).$$
  Since $PAP=A$, we have $B^T=QBQ$ and then
  $$PA=\left(
  \begin{array}{cc}
   Q^TB^T & 0 \\
   0& QB \\
  \end{array}
\right)=\left(
  \begin{array}{cc}
    BQ & 0 \\
   0& QB \\
  \end{array}
\right).$$
Thus, the graph (denoted by $H$) obtained from $Cay_L(Sym_n, S)$ by dual Seidel switching induced by $\varphi$ has at least two connected components.  Next we can show that $H$ has at most two connected components.
By Theorem~\ref{sameKC},  $H$ and  $Cay_L(Sym_n, S)$ have the same bipartite double and by Lemma~\ref{connectedness},  the bipartite double of $Cay_L(Sym_n, S)$ consists of two copies $Cay_L(Sym_n, S)$.  Thus by Lemma~\ref{connectedness}, $H$ cannot have more than two connected components.
Further,  as $Q^T(QB)Q=BQ$, namely $QB$ and $BQ$ are permutation similar,  the  graph $H$ obtained by applying
dual Seidel switching on $Cay_L(Sym_n, S)$
has two isomorphic connected components.

The fact that the bipartite double of every of the two connected components is isomorphic to $Cay_L(Sym_n, S)$ follows from the property that the combinatorial polarity $\varphi$ interchanges the parts of the bipartition of $Cay_L(Sym_n, S)$.
$\square$
\end{proof}

\medskip

Thus, Theorem \ref{TStar} with Corollary \ref{inv1Star} and Corollary \ref{invStar}, for every integer $n \geqslant 5$,  give two connected integral graphs with $n!/2$ vertices whose bipartite double is isomorphic to the left Star graph $Cay_L(Sym_n,S)$ (in other words, $Cay_L(Sym_n,S)$ is a Kronecker cover of this new connected integral graph).

\begin{proposition}\label{prop1}
    For an integer $n \geqslant 4$, one of the two connected components of the resulting integral graph given by Theorem \ref{TStar} and Corollary \ref{inv1Star} is isomorphic to $Cay_L(Alt_n,S_1)$, where $S_1 = (2~3)S$ and $S = \{(1~2),(1~3), \ldots, (1~n)\}$.
\end{proposition}
\begin{proof}
    Let us investigate the adjacency relation in the resulting graph. By the definition of the left Star graph, for a vertex $x$, the neighbourhood of $x$ is given by $N(x) = xS$. Denote by $N'(x)$ the neighbourhood of $x$ in the resulting graph. By Lemma \ref{Images} and Corollary \ref{inv1Star}, we have $$N'(x) = N(\varphi(x)) = \varphi(x)S = x(23)S,$$
    which is equivalent that the resulting graph is a left Cayley graph with generating set $S_1 = (2~3)S$. It is easy to see that $S_1$ contains only even permutations. Thus, the two connected components of the resulting graph are induced by the sets of even and odd permutations of $Sym_n$. $\square$
\end{proof}

\medskip
Proposition \ref{prop1}, Theorem \ref{TStar} and Corollary \ref{inv1Star} give a vertex-transitive 4-regular graph, which is already known from \cite[page 402]{MW15} as the Cayley graph $H_{15}$; it has spectrum $\{ (-3)^4, (-2)^{17}, (-1)^{4}, 0^{15}, 2^{11}, 3^8, 4^1 \}$.
Theorem \ref{TStar} and Corollary \ref{invStar} give a new 4-regular graph, which is not vertex-transitive and has spectrum $\{ (-3)^{7}, (-2)^{13}, (-1)^3, 0^{15},  1^1, 2^{15}, 3^{5}, 4^1 \}$.

\begin{problem}\label{ProblemStar}
What are integral graphs whose bipartite double is isomorphic to the left Star graph $Cay_L(Sym_n,S)$?
\end{problem}

We point out that to solve Problem \ref{ProblemStar}, in view of Theorem \ref{CommonBD} and Theorem \ref{sameKC}, it suffices to apply dual Seidel switching to the family of connected integral graphs given by Theorem \ref{TStar} and Corollary \ref{inv1Star}. Moreover, it is guaranteed that the family of connected integral given by Theorem \ref{TStar} and Corollary \ref{invStar} can be obtained for $Cay_L(Alt_n,S_1)$ by dual Seidel switching.

Let us note that Problem \ref{ProblemStar} is a particular case of \cite[Problem 1]{IP08}. In this section we have provided more motivation to investigate Problem 1 as well as \cite[Problem 1]{IP08}.

We conclude this section with one more open problem.
With use of computer, we checked that in the cases
$n = 5$ and $n = 6$ the integral graphs given by Theorem \ref{TStar} and Corollary \ref{invStar} are not vertex-transitive.

\begin{problem}
Show that integral graphs given by Theorem \ref{TStar} and Corollary \ref{invStar} are not vertex-transitive for $n \geqslant 5$.
\end{problem}

\section{Dual Seidel switching and Odd graphs}
In this section, we apply dual Seidel switching to the Odd graphs and construct an infinite family of integral graphs. In particular, we construct two new 4-regular integral graphs.
\subsection{Odd graphs}
For a positive integer $m$, the \emph{Odd graph}, denoted by $O_{m+1}$, is the graph whose vertex set is the set of $m$-subsets of a $(2m + 1)$-set $X$, where two $m$-sets are adjacent if and only if they are disjoint.
It is easy to see that every permutation of $X$ induces an automorphism of $O_{m+1}$. The following lemma gives the complete information about the spectrum and the automorphism group of the graph $O_{m+1}$.

\begin{lemma}[{\cite[Proposition 9.1.7]{BCN89}}]\label{OddGraph} The following statements hold.\\
{\rm (1)} The eigenvalues of $O_{m+1}$ are $(-1)^i(m+1-i)$ with multiplicity $\binom{2m+1}{i} - \binom{2m+1}{i-1}$, where $i$ runs over $\{0, 1, \ldots, m\}$;\\
{\rm (2)} The automorphism group of $O_{m+1}$ consists of the automorphisms induced by the permutations of $X$, which is the symmetric group $Sym_{2m+1}$.
\end{lemma}

The Odd graphs are among a more general graph family called Johnson graphs.
Let $X$ be a set of size $n$ and let $V = \binom{X}{d}$ be the collection of all $d$-subsets of $X$.
We denote by $J(n,d,r)$ the graph whose vertex set is $X$, where two vertices $A$ and $B$ are adjacent if and only if $|A \cap B| = d - r$.
The Odd graph $O_{m+1}$ is exactly the graph $J(2m+1,m,m)$.
The following lemma, originally stated for Johnson graphs, describes the eigenspaces of the Odd graph.

\begin{lemma}[{\cite[Lemma 6.2.3 and Theorem 6.3.3]{CG16}}] \label{lem:eigenspace}
    Let $A$ be the adjacency matrix of $O_{m+1}$ indexed by the $m$-subsets.
    For a subset $T \subseteq \{1,2, \ldots, 2m+1\}$, let $V_T$ be the collection of $m$-subsets that contain $T$.
    Let $W_i$ be the vector space spanned by the characteristic vectors $\mathds{1}_{V_T}$, where $T$ runs over all $i$-subsets.
    Then $W_{i-1} \subset W_i$ and the $i$-th eigenspace of $A$ is $W_i \cap W_{i-1}^\perp$, whose dimension is $\binom{2m+1}{i} - \binom{2m+1}{i-1}$, where $i=0,1, \ldots, m$ and $W_{-1}$ is regarded as the $0$-dimensional vector space.
\end{lemma}

\subsection{Dual Seidel switching and Odd graphs}

For a positive integer $t$, put $\tau_t:=(1~2)\ldots(2t-1~ 2t)$.
Recall that the vertex set $V$ of the Odd graph consists of $m$-subsets of an $(2m+1)$-set $X$. The permutation $\tau_t$ naturally acts on $X$. Denote by $\varphi_t$ the corresponding action on $V$.

\begin{lemma}
Given a positive integer $m$, $m \geqslant 2$, the following statements hold.
{\rm (1)} For any $t \in \{1,\ldots,m-1\}$, the permutation $\tau_t$ induces an involution $\varphi_t$ of $O_{m+1}$ that interchanges only non-adjacent vertices;\\
{\rm (2)} The permutation $\tau_m$ induces an involution $\varphi_m$ of $O_{m+1}$ that interchanges adjacent vertices as well as non-adjacent vertices.
\end{lemma}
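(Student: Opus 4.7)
The plan is to translate adjacency in $O_{m+1}$ (disjointness of $m$-subsets) into a condition on the action of $\tau_t$ on an $m$-subset $A \subseteq X = \{1,\ldots,2m+1\}$, and then argue by counting. Since $\tau_t^2$ is the identity of $X$, the induced map $\varphi_t$ on $m$-subsets is automatically an involution at the group-theoretic level, and it is nontrivial for any $t\geqslant 1$, $m\geqslant 2$ (for instance any $m$-subset containing $1$ but not $2$ is moved by $\varphi_t$).

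For part (1), I would argue by contradiction. Suppose some $m$-subset $A$ satisfies $A \cap \tau_t(A) = \emptyset$. Every fixed point of $\tau_t$ that lies in $A$ would belong to $A \cap \tau_t(A)$, so $A$ contains no fixed points of $\tau_t$ and hence $A \subseteq \{1,\ldots,2t\}$. Moreover, for each transposed pair $\{2i-1,2i\}$ with $1 \leqslant i \leqslant t$, the set $A$ can contain at most one of the two elements, otherwise that pair would contribute to $A \cap \tau_t(A)$. This forces $|A| \leqslant t$; combined with $|A| = m$ and the hypothesis $t \leqslant m-1$, we get the contradiction $m \leqslant t \leqslant m-1$. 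Consequently every $m$-subset $A$ satisfies $A \cap \tau_t(A) \neq \emptyset$, so $A$ and $\tau_t(A)$ are non-adjacent in $O_{m+1}$; in particular every pair interchanged by $\varphi_t$ is non-adjacent.

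For part (2), the bound that produced the contradiction in (1) becomes achievable when $t = m$, so I expect to verify (2) by exhibiting two explicit witnesses. Taking $A = \{1,3,5,\ldots,2m-1\}$, one immediately checks $\tau_m(A) = \{2,4,6,\ldots,2m\}$, which is disjoint from $A$, so $\varphi_m$ interchanges an adjacent pair. Taking $B = \{2m+1\} \cup \{1,3,5,\ldots,2m-3\}$, which has size $m$, the unique fixed point $2m+1$ of $\tau_m$ lies in $B \cap \tau_m(B)$, while $1 \in B \setminus \tau_m(B)$ guarantees $B \neq \tau_m(B)$, so $\varphi_m$ also interchanges a non-adjacent pair.

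I do not anticipate any real obstacle: the entire argument reduces to cardinality bookkeeping once the definition of adjacency in $O_{m+1}$ is unpacked. The only points worth a second look are the boundary case $m = 2$ of the witness $B$ in part (2) (where $\{1,3,\ldots,2m-3\}$ degenerates to $\{1\}$, still giving a valid $2$-subset $B = \{5,1\}$) and the verification that $\varphi_t$ is nontrivial, both of which follow immediately from $m \geqslant 2$.
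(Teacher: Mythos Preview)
Your argument is correct and follows essentially the same route as the paper: the same contradiction-by-counting in part (1) (no fixed points of $\tau_t$ in $A$, at most one element per $2$-cycle, hence $|A|\leqslant t<m$), and the same witness $\{1,3,\ldots,2m-1\}$ for the adjacent pair in part (2). Your write-up is in fact slightly more complete than the paper's, since you also exhibit an explicit non-adjacent interchanged pair in part (2) and verify nontriviality of $\varphi_t$, both of which the paper leaves implicit.
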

\begin{proof}
(1) The image of any vertex $Y_1 = \{s_1, s_2, \ldots, s_m\}$ under the involution $\varphi_t$ is the vertex $Y_2 = \{\tau_t(s_1), \tau_t(s_2), \ldots, \tau_t(s_m)\}$.
If they are adjacent, then the two $m$-subsets are disjoint.
We must have $\tau_t(s_i) \not\in Y_1$ for every $1 \leqslant i \leqslant m$.
So no element of $Y_1$ is fixed by $\tau_t$ and $s_i, s_j$ cannot be in the same $2$-cycle of $\tau_t$ for $1 \leqslant i \neq j \leqslant m$.
This forces $t \geqslant m$. Contradiction.

(2) The involution $\varphi_m$ interchanges the vertices $\{1,3,\ldots, 2m-1\}$ and $\{2,4,\ldots, 2m\}$, which are adjacent.
$\square$
\end{proof}

\medskip
It follows from Lemma \ref{OddGraph}(1) that the smallest eigenvalue of $O_{m+1}$ is $-m$ with multiplicity $2m$, and $O_{m+1}$ has no eigenvalue $m$.

For any positive integers $i,j$, $1 \leqslant i,j \leqslant 2m+1, i\ne j$, let us introduce a partition of the vertex set of $O_{m+1}$ into $V_{i,j},V_{i,\overline{j}},V_{\overline{i},j},V_{\overline{i},\overline{j}}$, where
$$V_{i,j}:=\{m\text{-subsets of }X \text{ that contain both $i$ and $j$}\},$$
$$V_{i,\overline{j}}:=\{m\text{-subsets of }X \text{ that contain $i$ and do not contain $j$}\},$$
$$V_{\overline{i},j}:=\{m\text{-subsets of }X \text{ that do not contain $i$ and contain $j$}\},$$
$$V_{\overline{i},\overline{j}}:=\{m\text{-subsets of }X \text{ that do not contain $i$ or $j$}\},$$
and define a function $f_{i,j}:  \,\, V(O_{m+1})\rightarrow \mathbb{R}$ by the following rule. For any $m$-subset $Y$, put
$$
f_{i,j}(Y):=
\left\{
  \begin{array}{ll}
    1, & Y \in V_{i,\overline{j}}; \\
    -1, & Y \in V_{\overline{i},j};\\
    0, & Y \in V_{i,j} \cup V_{\overline{i},\overline{j}}.
  \end{array}
\right.
$$

For any vertex subset $W \subseteq V$, we denote by $\mathds{1}_W$ the characteristic function of $W$, namely
$$
\mathds{1}_W(v) =
\begin{cases}
    1, & \textif v \in W; \\
    0, & \otherwise.
\end{cases}
$$

One can see that we have $f_{i,j} = \mathds{1}_{V_{i,\overline{j}}} - \mathds{1}_{V_{\overline{i},j}}$.

\begin{lemma}\label{partition}
For any integers $m, i,j$, where $m\geqslant 2$ and $1 \leqslant i,j \leqslant m, i\ne j$, the following statements hold.\\
{\rm (1)} The partition of the vertex set of $O_{m+1}$ into $V_{i,j},V_{i,\overline{j}},V_{\overline{i},j},V_{\overline{i},\overline{j}}$ is equitable with quotient matrix
$\left(
  \begin{array}{cccc}
    0 & 0 & 0 & m+1 \\
    0 & 0 & m & 1 \\
    0 & m & 0 & 1 \\
    m-1 & 1 & 1 & 0  \\
  \end{array}
\right)$;\\
{\rm (2)} The function $f_{i,j}$ is an $-m$-eigenfunction of $O_{m+1}$.
\end{lemma}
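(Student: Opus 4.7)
The plan is to prove (1) by direct counting and then deduce (2) from (1) by a one-line matrix calculation.

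For (1), I would fix a representative vertex $Y$ in each cell and count, for every other cell, how many neighbors of $Y$ land in that cell. If $Y\in V_{i,j}$, then $X\setminus Y$ has cardinality $m+1$ and contains neither $i$ nor $j$, so every $m$-subset of $X\setminus Y$ lies in $V_{\overline{i},\overline{j}}$; hence $Y$ has $m+1$ neighbors in $V_{\overline{i},\overline{j}}$ and none in the other cells. If $Y\in V_{i,\overline{j}}$, then $X\setminus Y$ has size $m+1$, contains $j$ and avoids $i$; a neighbor either contains $j$ (then it lies in $V_{\overline{i},j}$, and the remaining $m-1$ elements are chosen from the $m$ elements of $X\setminus (Y\cup\{j\})$, giving $m$ choices) or avoids $j$ (then it lies in $V_{\overline{i},\overline{j}}$, and the remaining $m$ elements are forced, giving $1$ choice). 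The row for $V_{\overline{i},j}$ is obtained by swapping the roles of $i$ and $j$. For $Y\in V_{\overline{i},\overline{j}}$, the complement $X\setminus Y$ has size $m+1$ and contains both $i$ and $j$; splitting a neighbor according to whether it contains $i$ and/or $j$, the four cases give $\binom{m-1}{m-2}=m-1$, $\binom{m-1}{m-1}=1$, $\binom{m-1}{m-1}=1$, and $\binom{m-1}{m}=0$ respectively. These counts assemble into exactly the stated quotient matrix.

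For (2), observe that $f_{i,j}$ is by construction constant on each cell of the partition, with cell values $(0,1,-1,0)^{\mathsf{T}}$ on $V_{i,j}, V_{i,\overline{j}}, V_{\overline{i},j}, V_{\overline{i},\overline{j}}$ respectively. By the defining property of an equitable partition, applying the adjacency operator $A$ to any function that is constant on cells yields another function constant on cells, whose cell-value vector is obtained by applying the quotient matrix $Q$ to the original cell-value vector. A direct computation with $Q$ from part (1) gives
\[
Q\begin{pmatrix} 0 \\ 1 \\ -1 \\ 0 \end{pmatrix}
=\begin{pmatrix} 0 \\ -m \\ m \\ 0 \end{pmatrix}
= -m\begin{pmatrix} 0 \\ 1 \\ -1 \\ 0 \end{pmatrix},
\]
so $Af_{i,j}=-m\,f_{i,j}$, as claimed.

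I expect no real obstacle: the entire argument is bookkeeping with binomial coefficients and one $4\times 4$ matrix-vector product. The only subtlety is keeping straight, in the fourth row of the quotient matrix, that the coefficient of $V_{i,j}$ is $m-1$ rather than $m$ or $m+1$ — this is precisely the entry that makes the eigenvalue work out to $-m$, and it is also the only place where the constraint $m\geqslant 2$ is actually needed so that $V_{\overline{i},\overline{j}}$ is nonempty.
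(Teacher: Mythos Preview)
Your proposal is correct and is exactly the approach the paper has in mind: the paper's proof of (1) is literally the single word ``Straightforward'' and of (2) is ``It follows from item (1)'', so your explicit neighbor counts and the $4\times 4$ eigenvector check simply spell out what the paper leaves to the reader. One small slip in your closing remark: the cell that forces the hypothesis $m\geqslant 2$ is $V_{i,j}$ (an $m$-subset cannot contain two specified points when $m=1$), not $V_{\overline{i},\overline{j}}$, but this does not affect the proof itself.
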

\begin{proof}
(1) Straightforward;\\
(2) It follows from item (1). $\square$
\end{proof}

\begin{theorem}\label{basis}
Eigenfunctions $f_{1,2m+1}, f_{2,2m+1}, \ldots, f_{2m,2m+1}$ form a basis of the $-m$-eigenspace of $O_{m+1}$.
\end{theorem}
\begin{proof}
We regard the eigenfunctions as vectors in the space $\mathbb{R}^{V(O_{m+1})}$.
Let us consider the Gram matrix of these vectors.
Since $\langle f_{i,2m+1},f_{i,2m+1} \rangle = \langle \mathds{1}_{V_{i,\overline{2m+1}}} - \mathds{1}_{V_{\overline{i},2m+1}},
\mathds{1}_{V_{i,\overline{2m+1}}} - \mathds{1}_{V_{\overline{i},2m+1}}
\rangle
=
\left| V_{i,\overline{2m+1}}\right| + \left| V_{\overline{i},2m+1} \right|
=
2 \binom{2m-1}{m-1}
$ for $i=1,2, \ldots, 2m$ and

\begin{align*}
     & \langle f_{i,2m+1},f_{j,2m+1} \rangle \\
    = & \langle \mathds{1}_{V_{i,\overline{2m+1}}} - \mathds{1}_{V_{\overline{i},2m+1}},
\mathds{1}_{V_{j,\overline{2m+1}}} - \mathds{1}_{V_{\overline{j},2m+1}}
\rangle \\
    = & \left| V_{i,\overline{2m+1}} \cap V_{j,\overline{2m+1}}\right| + \left| V_{\overline{i},2m+1} \cap V_{\overline{j}, 2m+1}\right| \\
    = & \binom{2m-2}{m-2} + \binom{2m-2}{m-1}\\
    =& \binom{2m-1}{m-1}
\end{align*}
for $1 \leqslant i \neq j \leqslant 2m$.
Therefore the Gram matrix $G = \binom{2m-1}{m-1}(J+I)$, where $J$ is the all-one matrix.
So $G$ is non-singular.
Hence the $2m$ eigenfunctions $f_{1,2m+1}, f_{2,2m+1}, \ldots, f_{2m,2m+1}$ are linearly independent.
By Lemma \ref{OddGraph}(1), they form a basis of the $-m$-eigenspace of $O_{m+1}$.
$\square$
\end{proof}

\medskip

Given positive integers $m \geqslant 2$ and $1 \leqslant t \leqslant m-1$, we denote by $O^t_{m+1}$  the graph obtained from $O_{m+1}$ by dual Seidel switching w.r.t. the involution $\varphi_t$ of $O_{m+1}$ induced by the permutation $\tau_t$.

\begin{theorem}\label{TOt} Let $m$ be a positive integer, $m \geqslant 2$. Then the following statements hold.\\
{\rm (1)} For any integer $t$, $1 \leqslant t \leqslant m-1$, the graph $O^t_{m+1}$ has eigenvalue $m$ with multiplicity $t$.\\
{\rm (2)} The $m-1$ graphs $O^t_{m+1}$, where $1 \leqslant t \leqslant m-1$, are integral and pairwise non-isomorphic.
\end{theorem}
\begin{proof}
(1) For a real square matrix $A$, the spectrum of $A^2$ is determined by the spectrum of $A$ by squaring the eigenvalues and summing up the multiplicity of opposite eigenvalues of $A$.
Since $\varphi_t$ is an involution, the adjacency matrices of $O_{m+1}$ and $O_{m+1}^t$ share the same square.
We will show (1) by determining the $-m$-eigenspace and $m$-eigenspace of $O_{m+1}^t$.
Note that $m$ is not an eigenvalue of $O_{m+1}$ and the multiplicity of $-m$ of $O_{m+1}$ is $2m$.
We study the action of $\varphi_t$ on the eigenfunctions in Theorem \ref{basis}.
One can see that $\varphi_t V_{i,j} = V_{\tau_t(i),\tau_t(j)}$ (and similarly for $V_{i,\overline{j}}$, $V_{\overline{i},j}$ and $V_{\overline{i},\overline{j}}$).
So we have $\varphi_t f_{i,2m+1} = f_{\tau_t(i),2m+1}$ for $1 \leqslant t \leqslant m-1$.
Let $A$ be the adjacency matrix of $O_{m+1}$ and $B$ the adjacency matrix of $O_{m+1}^t$.
They are related by $B = P A$ where $P = P^T$ is the permutation matrix of the involution $\varphi_t$.

For $1 \leqslant i \leqslant t$, we have $B (f_{2i-1,2m+1} - f_{2i,2m+1}) = PA (f_{2i-1,2m+1} - f_{2i,2m+1}) = -m P (f_{2i-1,2m+1} - f_{2i,2m+1}) = m (f_{2i-1,2m+1} - f_{2i,2m+1})$ and $B (f_{2i-1,2m+1} + f_{2i,2m+1}) = -m (f_{2i-1,2m+1} + f_{2i,2m+1})$, which shows that $f_{2i-1,2m+1} - f_{2i,2m+1}$ and $f_{2i-1,2m+1} + f_{2i,2m+1}$ are an $m$-eigenfunction and a $(-m)$-eigenfunction of $O_{m+1}^t$, respectively.

For $t < i \leqslant m-1$, we have $B f_{i,2m+1} = -m f_{i,2m+1}$, which shows that $f_{i,2m+1}$ remains a $(-m)$-eigenfunction of $O_{m+1}^t$.

Thus, we have constructed $t$ eigenfunctions of the eigenvalue $m$ and $(2m-t)$ eigenfunctions of the eigenvalue $-m$ for the graph $O_{m+1}^t$.
It is clear that these eigenfunctions are all linearly independent.
Since $t + (2m-t) = 2m$, the $m$-eigenspace and $-m$-eigenspace of $O_{m+1}^t$ are determined.

(2) It follows directly from (1). $\square$
\end{proof}

\medskip
In the case $m = 2$, the Odd graph $O_{3}$ is isomorphic to the Petersen graph. Theorem \ref{TOt}, applied to $O_3$, gives a cubic integral graph with spectrum $\{ (-2)^3, (-1)^2, 1^3, 2^1, 3^1 \}$.
In the case $m =3$, Theorem \ref{TOt} gives two new 4-regular graphs with spectra $\{(-3)^5, (-2)^4, (-1)^9, 1^5, 2^{10}, 3^1, 4^1\}$ and $\{ (-3)^4, (-2)^6, (-1)^8, 1^6, 2^8, 3^2, 4^1 \}$. We point out that these three graphs are not vertex-transitive.

The following theorem determines the spectrum of the integral graphs found in Theorem \ref{TOt}.

\begin{theorem}\label{SpecOt}
    The spectrum of $O_{m+1}^t$ is determined as follows. For $i = 0, \ldots, m$,
    the eigenvalue $(-1)^{i+1} (m+1-i)$ is of multiplicity $a_{i}$ and the eigenvalue $(-1)^{i} (m+1-i)$ is of multiplicity $\binom{2m+1}{i} - \binom{2m+1}{i-1} - a_i$, where $$a_i = \frac{|\{i\text{-subsets not fixed by } \tau_t\}| - |\{(i-1)\text{-subsets not fixed by } \tau_t\}|}{2};$$ here we mean that $0$-subset is the empty set, hence fixed by every permutation, and $-1$-subset does not exist, hence the cardinality is zero.
\end{theorem}

\begin{proof}
    We adopt the notations in Lemma \ref{lem:eigenspace}.
    By Lemma \ref{OddGraph} and Lemma \ref{lem:eigenspace}, the $(-1)^{i} (m+1-i)$-eigenspace of $O_{m+1}$ is  $U_i := W_i \cap W_{i-1}^\perp$.
    Note that $v = \frac{v + \varphi_t v}{2} + \frac{v - \varphi_t v}{2}$ gives a decomposition $\mathbb{R}^{\binom{X}{m}} = F \oplus N$, where $\varphi_t f=f$ for every $f \in F$ and $\varphi_t n = -n$ for every $n \in N$.
    Let $F_i = U_i \cap F$ and $N_i = U_i \cap N$.
    Then $N_i$ is the $(-1)^{i+1} (m+1-i)$-eigenspace of $O_{m+1}^t$ and $F_i$ is the $(-1)^{i} (m+1-i)$-eigenspace of $O_{m+1}^t$.
    To determine the dimension of $N_i$, we consider $WN_i = W_i \cap N$.
    Then $N_i = WN_i \cap U_i$ and $WN_i = N_0 \oplus N_1 \oplus \cdots \oplus N_i$.
    So $\dim N_i = \dim WN_i - \dim WN_{i-1}$.
    Note that $\dim WN_i$ can be directly computed by considering the action of $\varphi_t$ on the basis of $W_i$. \qed
\end{proof}

\section{Concluding remarks}

Note that Theorem \ref{TOt} exhaust all involutions of the Odd graphs that interchange only non-adjacent vertices, while Theorem \ref{TStar} with Corollary \ref{inv1Star} and Corollary \ref{invStar} give examples of two such involutions of the Star graph.
The bipartite double of the Odd graph is known as the doubled Odd graph (see \cite[p. 259]{BCN89}).
In this paper, in view of Theorem \ref{CommonBD} and Theorem \ref{sameKC}, we have actually found all the graphs whose bipartite double is isomorphic to the doubled Odd graph (in other words, whose Kronecker cover is isomorphic to the doubled Odd graph). Let us mention here the paper \cite{Dam06}, which is closely related to this paper and where graphs with the same spectrum as a distance-regular graph were studied. In particular, the authors of \cite{Dam06} found an infinite family of graphs with the same spectrum of a doubled Odd graph.

We are wondering if the dual Seidel switching can be fruitfully applied to other known 4-regular graphs.
Let us mention that the full automoprhism group of the Star graph is known from \cite{F06}.

Finally, we point out that Theorem \ref{basis} gives a basis for the $-m$-eigenspace of the Odd graph $O_{m+1}$, which is of interest by itself.

\section*{Acknowledgment} \label{Ack}
The reported study was funded by RFBR according to the research project 20-51-53023.
 Sergey Goryainov, Elena Konstantinova and Honghai Li thank TGMRC (Three Gorges Mathematical Research Center) of China Three Gorges University in Yichang, Hubei, China, for supporting the visits of the authors to work on this research project in April 2019. Honghai Li is supported by the NSF of China (No. 11561032), the NSF for Jiangxi Distinguished Young Scholars (No. 20171BCB23032), and  the NSF of Jiangxi Province (No. 20192BAB201001).
  Da Zhao is supported by the NSF of China (No. 11671258). Sergey Goryainov is supported by STCSM (No. 17690740800).
 The work is supported by Mathematical Center in Akademgorodok, the
agreement with Ministry of Science and High Education of the Russian
Federation number 075-15-2019-1613.
The authors thank Professor Vladislav Kabanov who pointed out that dual Seidel switching applied to an integral graph produces an integral graph. Our thanks goes to Professor Jack Koolen for his comments on the paper. Finally, the authors thank the anonymous referee whose suggestions significantly improved the paper.

\end{document}